\newtheorem{theorem}{Theorem}[section]
\newtheorem{proposition}[theorem]{Proposition}
\newtheorem{lemma}[theorem]{Lemma}
\newtheorem{conjecture}[theorem]{Conjecture}
\newtheorem{observation}[theorem]{Observation}
\newtheorem{computation}{Computation}[section]
\begin{document}

\title{List-edge-coloring triangulations with maximum degree at most five}
\author{
Joshua Harrelson\footnote{Department of Mathematics and Statistics, Middle Georgia State University, Macon, GA 31206 USA; joshua.harrelson@mga.edu}
\qquad
Jessica McDonald\footnote{Department of Mathematics and Statistics,
Auburn University,
Auburn, AL 36849 USA; mcdonald@auburn.edu; Supported in part by Simons Foundation grant \#845698.}
}

\date{}

\maketitle

\begin{abstract}  
We prove that triangulations with maximum degree at most 5 satisfy the List-Edge-Coloring Conjecture.
\end{abstract}

\section{Introduction}
An \emph{edge list assignment} for a graph $G$ is a function $L$ that assigns to each edge $e \in E(G)$ a list of colors $L(e)$. Given such an $L$, an \emph{$L$-edge-coloring} of $G$ is a (proper) edge-coloring of $G$ such that every edge $e$ is given a color from $L(e)$. Note that the classical notion of a $k$-edge-coloring of $G$ can be viewed as an $L$-edge-coloring for the list assignment $L$ defined by $L(e) = \{1,\ldots, k\}$ for all $e \in E(G)$. We say a graph $G$ is \emph{$k$-list-edge-colorable} if it is $L$-edge-colorable for every edge list assignment $L$ such that $|L(e)| \geq k$ for all $e \in E(G)$. The \emph{list-chromatic index} of $G$, denoted $\chi'_\ell(G)$, is the minimum $k$ such that $G$ has a $k$-list-edge coloring. We immediately get that $\chi'_\ell(G) \geq \chi'(G) \geq \Delta$  for every graph $G$, where $\chi'(G)$ is the chromatic index of $G$ (the minimum $k$ such that $G$ is $k$-edge-colorable), and $\Delta:=\Delta(G)$ is the maximum degree of $G$. 

In this paper we consider every graph to be simple, hence Vizing's Theorem \cite{Viz} says that $\chi'(G)\leq \Delta+1$ for all graphs $G$. Vizing \cite{Vz} conjectured that this upper bound also holds for list-edge colorings.

\begin{conjecture}[Vizing \cite{Vz}]
\label{vizcon}
If $G$ is a graph, then $\chi'_\ell(G) \leq \Delta+1$.
\end{conjecture}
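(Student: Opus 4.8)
The plan is to attack Conjecture~\ref{vizcon} through the \emph{kernel method}, since this is the technique that has resolved the list-edge-coloring conjecture in every case where it is currently known. Recall that an orientation $D$ of a graph $H$ is \emph{kernel-perfect} if every induced subdigraph of $D$ contains a kernel, that is, an independent set $K$ such that every vertex outside $K$ has an out-neighbor in $K$. The kernel lemma (Bondy, Boppana, and Siegel) states that if $|L(v)| \geq d^+_D(v) + 1$ for every vertex $v$ and $D$ is kernel-perfect, then $D$ admits an $L$-coloring. Applying this to an orientation of the line graph $L(G)$, in which vertices are edges of $G$, the goal reduces to the purely structural problem of producing, for every graph $G$, a kernel-perfect orientation of $L(G)$ whose maximum out-degree is at most $\Delta$; this exactly matches the requirement $|L(e)| \geq \Delta + 1 = d^+(e) + 1$.

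First I would treat the bipartite case as a template, following Galvin: starting from a proper edge-coloring of a bipartite $G$ with parts $X$ and $Y$, one orients each pair of adjacent edges according to whether they meet in $X$ or in $Y$ and according to the relative order of their colors, obtaining an orientation of $L(G)$ with out-degrees at most $\Delta - 1$ whose kernel-perfectness follows from a Gale--Shapley stability argument. This already yields $\chi'_\ell(G) = \chi'(G) \leq \Delta + 1$ for bipartite $G$. Next I would pass to a minimal counterexample $G$, which may be assumed edge-critical, and attempt to extend this bookkeeping to arbitrary graphs by analyzing the local structure around a vertex of maximum degree and building the desired orientation of $L(G)$ edge by edge, allowing the slightly weaker out-degree bound $\Delta$ in place of $\Delta - 1$.

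The hard part, and the reason Conjecture~\ref{vizcon} remains open in general, is precisely that line graphs of non-bipartite graphs need not admit kernel-perfect orientations of small out-degree: odd structure in $G$ produces odd cliques in $L(G)$, and the natural orientations of an odd clique have no kernel at all. Overcoming this would require either a genuinely new combinatorial invariant that controls the existence of kernels in orientations of line graphs, or a replacement for the kernel method altogether. As a fallback that at least confirms the conjecture asymptotically, I would invoke the semi-random (nibble) method in the spirit of Kahn to obtain $\chi'_\ell(G) \leq (1 + o(1))\Delta$; sharpening the $o(\Delta)$ error term down to the exact additive constant~$1$ is exactly the barrier the kernel method is meant to clear, and I expect essentially all of the difficulty of a full proof to lie in closing that final gap.
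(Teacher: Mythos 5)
This statement is a \emph{conjecture} in the paper, not a theorem: the authors do not prove it, and indeed it remains open for general graphs (and even for planar graphs with $5\le\Delta\le 7$, which is what motivates their Theorem~\ref{main}). So there is no proof in the paper to compare yours against, and your submission is not a proof either --- as you yourself say, the kernel method breaks down exactly where it would need to succeed. Concretely, the missing step is the construction, for an arbitrary graph $G$, of a kernel-perfect orientation of the line graph $L(G)$ with maximum out-degree at most $\Delta$. Galvin's orientation relies on the two-sided structure of a bipartite graph (each edge of $L(G)$ corresponds to a meeting in $X$ or in $Y$, and the colors give a consistent ``preference order'' on each side); in a non-bipartite graph the triangles of $G$ produce odd cliques in $L(G)$, and an odd clique admits no kernel-perfect orientation in which every vertex has small out-degree, so the stability argument has nothing to latch onto. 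Your fallback via the nibble method gives only $\chi'_\ell(G)\le(1+o(1))\Delta$, which does not imply the additive bound $\Delta+1$ for any fixed $\Delta$. In short: what you have written is an accurate survey of why the conjecture is hard, but every actual assertion of progress beyond the bipartite case is either conditional on an unconstructed object or weaker than the claimed bound, so the conjecture is not established. If your goal was to engage with what the paper actually proves, the right target is Theorem~\ref{main}, where the authors combine Euler-formula degree-sequence enumeration, uniqueness arguments for small triangulations, and Schauz's signed 1-factorization criterion --- a completely different toolkit from the kernel method.
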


Conjecture \ref{vizcon} has been verified for all graphs with $\Delta \leq 4$. The $\Delta=3$ case was proved by Vizing \cite{Vz} in 1976 and independently by Erd\H{o}s, Rubin, and Taylor \cite{ERT} in 1979. The $\Delta=4$ case of Conjecture \ref{vizcon} was proved in 1998 by Juvan, Mohar, \v{S}krekovski \cite{JMS}. Since there are graphs with $\Delta=3$ and $\Delta=4$ having $\chi'(G)=\Delta+1$, these results are tight. However, in general, we may hope for more than Conjecture \ref{vizcon}. The famous List-Edge Coloring Conjecture (LECC), which follows, has been attributed to many sources, some as early as 1975 (see e.g.~\cite{JT}).

\begin{conjecture}[LECC]
\label{lecc}
If $G$ is a graph, then $\chi'(G)=\chi'_\ell(G)$.
\end{conjecture}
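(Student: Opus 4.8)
Conjecture \ref{lecc} is one of the central open problems of the area, so the realistic goal of a ``proof proposal'' is to lay out the single strategy that has produced every partial result and to pinpoint exactly where it breaks. Since the excerpt already records the trivial inequality $\chi'_\ell(G) \ge \chi'(G)$, the entire content of the conjecture is the reverse bound $\chi'_\ell(G) \le \chi'(G)$. The first move is to pass to the \emph{line graph} $L(G)$, whose vertices are the edges of $G$ and in which two vertices are adjacent precisely when the corresponding edges share an endpoint. An $L$-edge-coloring of $G$ is then nothing but a proper list-coloring of $L(G)$, so the conjecture is equivalent to the assertion that every line graph is \emph{chromatic-choosable}: its list-chromatic number equals its chromatic number.

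To establish chromatic-choosability of a graph $H$ the standard engine is the kernel method. If $H$ admits an orientation in which every vertex has out-degree at most $d-1$ and every induced subdigraph possesses a kernel --- an independent set that every other vertex reaches by a directed edge --- then $H$ is $d$-choosable. For $H = L(G)$ one manufactures such an orientation from a proper $\chi'(G)$-edge-coloring of $G$: the edges meeting at a fixed vertex of $G$ form a clique in $L(G)$ carrying distinct colors, and orienting each such clique according to its colors controls the out-degrees. The plan is therefore to orient $L(G)$ so that every out-degree is at most $\chi'(G)-1$ while keeping the digraph kernel-perfect, and then to apply the kernel lemma to obtain $\chi'_\ell(G) \le \chi'(G)$, which together with the trivial bound yields equality.

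The main obstacle is the kernel-perfectness requirement. When $G$ is bipartite this is exactly what Galvin supplied: line graphs of bipartite graphs are perfect, a suitably oriented perfect graph always has a kernel, and one can simultaneously force all out-degrees down to $\chi'(G)-1$; this already settles Conjecture \ref{lecc} for every bipartite $G$. For a general $G$ the argument collapses, because $L(G)$ need not be perfect --- the line graph of an odd cycle of length at least five is itself an odd hole --- and no orientation is known that is at once out-degree-bounded and guaranteed to be kernel-perfect. Overcoming this is precisely the open core of the problem. It is also why restricting the host graph is fruitful: on a structured class such as the planar triangulations of bounded maximum degree considered in this paper, one abandons the kernel method in favor of identifying reducible configurations and ruling them out by discharging, thereby establishing the conjecture on a family where the general technique has no purchase.
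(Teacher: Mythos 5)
This statement is Conjecture \ref{lecc}, the List-Edge-Coloring Conjecture; the paper records it as an open problem and offers no proof of it, so there is no argument of the authors' to compare yours against. Your proposal, as you acknowledge, is not a proof: the gap is the conjecture itself. Your account of the standard attack is accurate --- reduce to chromatic-choosability of line graphs, orient $L(G)$ with out-degrees at most $\chi'(G)-1$ using a $\chi'(G)$-edge-coloring, and invoke the kernel lemma --- and you correctly locate the obstruction: for non-bipartite $G$ the line graph need not be perfect, and no kernel-perfect orientation with the required out-degree bound is known. That is a fair summary of why Galvin's theorem stops at bipartite graphs.

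Two factual corrections about how this paper relates to the conjecture. First, the paper does not establish Conjecture \ref{lecc} on any class; its Theorem \ref{main} proves the weaker Vizing-type bound $\chi'_\ell(G) \leq \Delta+1$ (Conjecture \ref{vizcon}) for triangulations with $\Delta = 5$, which for these graphs is one color above what the LECC would predict whenever $\chi'(G)=\Delta$. Second, the method here is not discharging: Lemma \ref{count} uses Euler's formula to reduce to finitely many degree sequences, the extremal sequences are handled by identifying the unique triangulations realizing them and certifying $6$-list-edge-colorability via Schauz's signed count of $1$-factorizations (Theorem \ref{Sch}, an extension of the Combinatorial Nullstellensatz), and the remaining sequences are excluded from an edge-minimal counterexample by direct structural arguments. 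So if you keep the closing sentence of your discussion, it should be revised to reflect both the weaker target and the actual technique.
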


The LECC is true for a number of special families, most notably bipartite graphs due to Galvin \cite{G} in 1995 (see also \cite{MP} for an extension). As an example of how far away this conjecture still is however, consider that it has not yet been established for all even cliques (odd cliques were established by H\"{a}ggkvist and Janssen \cite{HJ}, and cliques of order equal to a prime plus one were established by Schauz \cite{s3}).

More is known about Conjectures \ref{vizcon} and \ref{lecc} for planar graphs, where both edge-coloring and list-edge-coloring are somewhat simpler. While  it is NP-complete to decide whether a graph has chromatic index $\Delta$ or $\Delta+1$ (Holyer \cite{Ho}), all planar graphs with $\Delta\geq 7$ have $\chi'(G)=\Delta$ (Sanders and Zhao \cite{SZ}, Zhang \cite{Zh}). We may therefore expect $\chi'_\ell(G)=\Delta$ for all planar graphs $G$ with $\Delta\geq 7$, but this has only been established for $\Delta \geq 12$ (by Borodin, Kostochka and Woodall \cite{BKW}). Conjecture \ref{vizcon} has been pushed further, and we now know it holds for planar graphs when $\Delta \geq 8$ (Bonamy \cite{Bo}). This leaves the planar case of Conjecture \ref{vizcon} open for graphs with $5 \leq \Delta \leq 7$. In this paper we show that the stronger LECC holds for $\Delta\leq 5$ when $G$ is a triangulation (i.e.~when all faces are triangles).

\begin{theorem}\label{main} If $G$ is a triangulation with $\Delta \leq 5$, then $\chi'_l(G)=\chi'(G).$
\end{theorem}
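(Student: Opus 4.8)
The plan is to first use Euler's formula to see that the family is finite and highly structured. For any triangulation one has $m = 3n-6$, so the total deficiency is $\sum_{v}(6-\deg v) = 6n - 2m = 12$; since $\Delta = 5$ forces every term $6-\deg v$ to be at least $1$, this immediately gives $n \le 12$, and more precisely it tells us that vertices of degree $3$ and $4$ are scarce (each degree-$3$ vertex eats $3$ of the $12$ units of deficiency, each degree-$4$ vertex eats $2$). I would then record, for each $6 \le n \le 12$, the finitely many admissible degree sequences, noting that the unique $5$-regular case ($n=12$) is the icosahedron and that the next-to-regular cases ($n=11$ with ten $5$'s and one $4$, and the sparsest $n=10$ sequences) have their low-degree vertices isolated among degree-$5$ neighbors.

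Next I would set up the standard list-coloring reduction. Call an edge $e=uv$ \emph{reducible} if $\deg u + \deg v \le 7$: after $L$-edge-coloring $G-e$, the edge $e$ is adjacent to at most $\deg u + \deg v - 2 \le 5$ colored edges, so a color survives in $L(e)$, since $\sizeof{L(e)}\ge 6$. For triangulations that contain a degree-$3$ vertex adjacent to a vertex of degree at most $4$ (or two adjacent degree-$3$/degree-$4$ vertices), such reducible edges exist, and one can peel them off one at a time; the deficiency count guarantees that the ``sparse'' triangulations (those with many or clustered low-degree vertices) are handled this way. Care is needed because $G-e$ is no longer a triangulation, so I would phrase the induction over planar graphs of maximum degree at most $5$ carrying a strengthened hypothesis, or equivalently verify each peeling step as a finite local completion.

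The main obstacle is precisely the \emph{near-regular} regime. In the icosahedron every edge has $\deg u + \deg v = 10$, and already in the $n=11$ graph and the extremal $n=10$ graphs every edge has degree-sum at least $8$; hence the line graph $L(G)$ is not $5$-degenerate and the greedy/deletion reducibility above fails \textbf{entirely}. This is where essentially all the work lies. For the icosahedron I would observe that it is planar, $5$-regular and Class~$1$ (it $1$-factors into five perfect matchings), so the theorem of Ellingham and Goddyn that a $k$-regular $k$-edge-colorable planar graph is $k$-list-edge-colorable yields $\chi'_\ell = 5 \le 6$ outright. For the remaining near-regular graphs I would tackle $6$-choosability of $L(G)$ by one of two finite, structural routes: either verify the Alon--Tarsi/Combinatorial Nullstellensatz criterion by exhibiting an orientation of $L(G)$ with all out-degrees at most $5$ whose numbers of even and odd Eulerian sub-orientations differ, or start from a proper $6$-edge-coloring (available by Vizing's Theorem since $\Delta+1 = 6$) and repair it into an $L$-edge-coloring using list-respecting Kempe-type recolorings localized around the one or two low-degree vertices. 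I expect this near-regular analysis, rather than the Euler-formula bookkeeping or the sparse reductions, to consume the bulk of the argument.
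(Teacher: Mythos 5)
Your high-level architecture matches the paper's: Euler's formula gives $\sum_v(6-\deg v)=12$, hence finitely many degree sequences and $n\le 12$; the icosahedron is dispatched by Ellingham--Goddyn; and the remaining near-regular graphs are attacked via a Nullstellensatz-type criterion. But there are two genuine gaps. The first is your reduction step. Deleting a reducible edge $e=uv$ with $\deg u+\deg v\le 7$ does leave a graph whose $L$-coloring extends to $e$, but $G-e$ is not a triangulation, so the induction does not close within the class the theorem is about; and your proposed repair --- inducting over all planar graphs of maximum degree $5$ --- is circular, since that is precisely the open case of Vizing's list conjecture. The paper avoids this by deleting a degree-$3$ \emph{vertex} $v$ instead: in a triangulation $N(v)$ induces a triangle, so $G-v$ is again a triangulation, and the three mutually adjacent edges at $v$ can be colored greedily from residual lists of sizes at least $3,2,2$ provided some neighbor of $v$ has degree at most $4$. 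This yields the key claim that in an edge-minimal counterexample every degree-$3$ vertex has all neighbors of degree $5$, which (together with the fact that planar graphs contain no $K_{3,3}$ or subdivision thereof) eliminates all degree sequences with $|V_3|\ge 1$ except those reduced to the $|V_3|=0$ graphs.

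The second gap is that the part you yourself identify as ``the bulk of the argument'' is left as a promise. Before any finite verification one must show that each admissible degree sequence with $|V_3|=0$ is realized by a \emph{unique} triangulation (or by none: the sequence $|V_4|=1,|V_5|=10$ on $11$ vertices, which you treat as existing, is in fact impossible); the paper spends most of Section 2 on these uniqueness arguments. Only then does the Alon--Tarsi-type computation make sense, and the paper carries it out concretely via Schauz's signed count over $1$-factorizations of a $6$-regular supergraph, evaluated by an explicit algorithm. Your alternative route --- repairing a Vizing $6$-edge-coloring by ``list-respecting Kempe-type recolorings'' --- is not viable as stated: Kempe chains in two colors $a,b$ need not respect edge lists that vary from edge to edge, and no general mechanism of this kind is known for list edge coloring. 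So the plan's skeleton is right, but the step that closes the induction is wrong as written, and the finite structural and computational core is missing.
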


Most of the above-mentioned papers utilize the discharging method in their proofs. A particularly insightful proof by Cohen and Havet \cite{CH} highlights the trouble that triangular faces cause in such arguments (their proof shows Conjecture \ref{vizcon} for $\Delta \geq 9$, which was previously established by Borodin \cite{B}). Theorem \ref{main}
deals with these troublesome cases for $\Delta \leq 5$. In fact, there are not so many such cases: in Section 2 of this paper we will show that there are exactly twelve triangulations with $\Delta\leq 5$, eight of which have $\Delta=5$. In Section 3 we note that some of these graphs are known to satisfy the LECC and we explain how the recent work of Dvo\v{r}\'{a}k \cite{Dv} can be used to computationally show the remaining graphs also satisfy the List-Edge Coloring Conjecture.

\section{Twelve triangulations with \texorpdfstring{$\Delta\leq 5$}{leq5} }

Triangulations with $\Delta\leq5$ are special in that only finitely many degree sequences are possible, via Euler's formula. For what follows we let $V_i=\{v\in V(G):\deg(v)=i\}$. 

\begin{lemma}
\label{count}
If $G$ is a triangulation with $\Delta\leq 5$, then either $G=K_3$ or $V(G)=V_3\cup V_4\cup V_5$ and $12= 3|V_3|+2|V_4|+|V_5|$.
\end{lemma}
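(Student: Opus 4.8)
The plan is to use Euler's formula together with the defining property of a triangulation: every face is a triangle. First I would recall that for a connected plane graph we have $|V| - |E| + |F| = 2$, where $V$, $E$, $F$ denote the vertex, edge, and face sets. Since $G$ is a triangulation, every face is bounded by exactly three edges, and every edge lies on exactly two faces, so counting incidences between faces and edges gives $3|F| = 2|E|$, i.e. $|F| = \tfrac{2}{3}|E|$. Substituting this into Euler's formula yields $|V| - |E| + \tfrac{2}{3}|E| = 2$, which simplifies to $|E| = 3|V| - 6$. (I should note that a triangulation on at least three vertices is automatically connected and simple, so these standard forms apply; I would handle or dismiss the tiny degenerate cases at the outset.)

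Next I would bring in the degree hypothesis. Since $\Delta = 5$, every vertex has degree at most $5$; and since $G$ is a triangulation on at least $3$ vertices, every vertex has degree at least $3$ (a vertex of degree $\le 2$ cannot be surrounded entirely by triangular faces). Hence $V(G) = V_3 \cup V_4 \cup V_5$, which is the first assertion. This gives $|V| = |V_3| + |V_4| + |V_5|$, and by the handshake lemma $2|E| = \sum_v \deg(v) = 3|V_3| + 4|V_4| + 5|V_5|$.

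Now I would combine these. From $|E| = 3|V| - 6$ we get $2|E| = 6|V| - 12$. Substituting the two degree-based expressions,
\[
3|V_3| + 4|V_4| + 5|V_5| = 6\bigl(|V_3| + |V_4| + |V_5|\bigr) - 12.
\]
Rearranging moves everything to one side:
\[
12 = \bigl(6-3\bigr)|V_3| + \bigl(6-4\bigr)|V_4| + \bigl(6-5\bigr)|V_5| = 3|V_3| + 2|V_4| + |V_5|,
\]
which is exactly the claimed identity.

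This proof is essentially a routine application of Euler's formula, so there is no real ``hard part''; the only point demanding any care is the justification that every vertex has degree at least $3$, so that $V(G) = V_3 \cup V_4 \cup V_5$ with no low-degree vertices slipping through. This follows from the triangulation condition: a vertex of degree $1$ or $2$ would be incident to a face that is not a triangle. I would state this cleanly, perhaps noting the implicit assumption that $G$ has enough vertices for ``triangulation'' to be meaningful, and otherwise let the arithmetic speak for itself.
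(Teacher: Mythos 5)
Your proposal is correct and follows essentially the same route as the paper: Euler's formula for triangulations to get $|E|=3|V|-6$, the degree-sum formula, and the observation that a triangulation (other than the degenerate triangle) has minimum degree at least $3$. The only cosmetic difference is that you derive $|E|=3|V|-6$ from face-counting explicitly, whereas the paper cites it directly; the arithmetic and conclusion are identical.
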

\begin{proof}
The only triangulation with a vertex of degree 2 or less is the triangle $K_3$ so, we may assume that $V(G)=V_3\cup V_4\cup V_5$. Since $G$ is a triangulation, Euler's formula gives us $|E(G)|=3|V(G)|-6$. Applying the degree-sum formula this yields the following:
$$12=6|V(G)|-\sum_{v\in V(G)}\deg(v)$$
Substituting $|V(G)|=|V_3|+|V_4|+|V_5|$ and $\sum_{v\in V(G)} \deg(v)=3|V_3|+4|V_4|+5|V_5|$ gives our desired result.
\end{proof}

Note that if we apply the argument of Lemma \ref{count} to triangulations with vertices of degree $6$, then the $|V_6|$ term will cancel, meaning that $|V_6|$ would be unrestricted.

\begin{table}
\begin{tabular}{c|ccccccccccccccccccc}
 Case  & 1& 2& 3& 4& 5& 6& 7&8&9&10&11&12&13&14&15&16&17&18&19\\
 \hline
 $|V_3|$ & 4&3&3&2&2&2&2&1&1&1&1&1&0&0&0&0&0&0&0\\
 $|V_4|$ & 0&0&1&0&1&2&3&0&1&2&3&4&0&1&2&3&4&5&6\\
 $|V_5|$ & 0&3&1&6&4&2&0&9&7&5&3&1&12&10&8&6&4&2&0 \\
\end{tabular}
\caption{degree sequences of $|V_3|$, $|V_4|$, $|V_5|$ satisfying $12= 3|V_3|+2|V_4|+|V_5|$.}
\label{cases}
\end{table}

There are only 19 linear combinations of $|V_3|$, $|V_4|$, and $|V_5|$ that satisfy the equation in Lemma \ref{count}, and they are listed in Table \ref{cases}. We will hereafter refer to each of these potential degree sequences by their case number, as listed in this table. Note that cases 8 and 14 are impossible since they each contain a set of 5 vertices of degree 5 all of which must be adjacent to one another, forcing a copy of $K_5$. In the remainder of this section, we will provide arguments to eliminate all but 11 of the remaining cases, and then show that each of these cases corresponds to a unique triangulation, with case $i$ corresponding to graph $G_i$ in Figure \ref{AllTriangulations}. We define $\mathcal{G}$ as the set of these 11 graphs together with $K_3$. Specifically, given the pictures in Figure \ref{AllTriangulations}, we define $\mathcal{G}=\{G_i:i\in \{0,1,4,6,7,11,13,15,16,17,18,19\}\}$. 

\begin{theorem}\label{Pics} $\mathcal{G}$ is the set of all triangulations with $\Delta(G)\leq5$. 
\end{theorem}

\begin{figure}[hb]
    \centering
    \includegraphics[height=8.5cm]{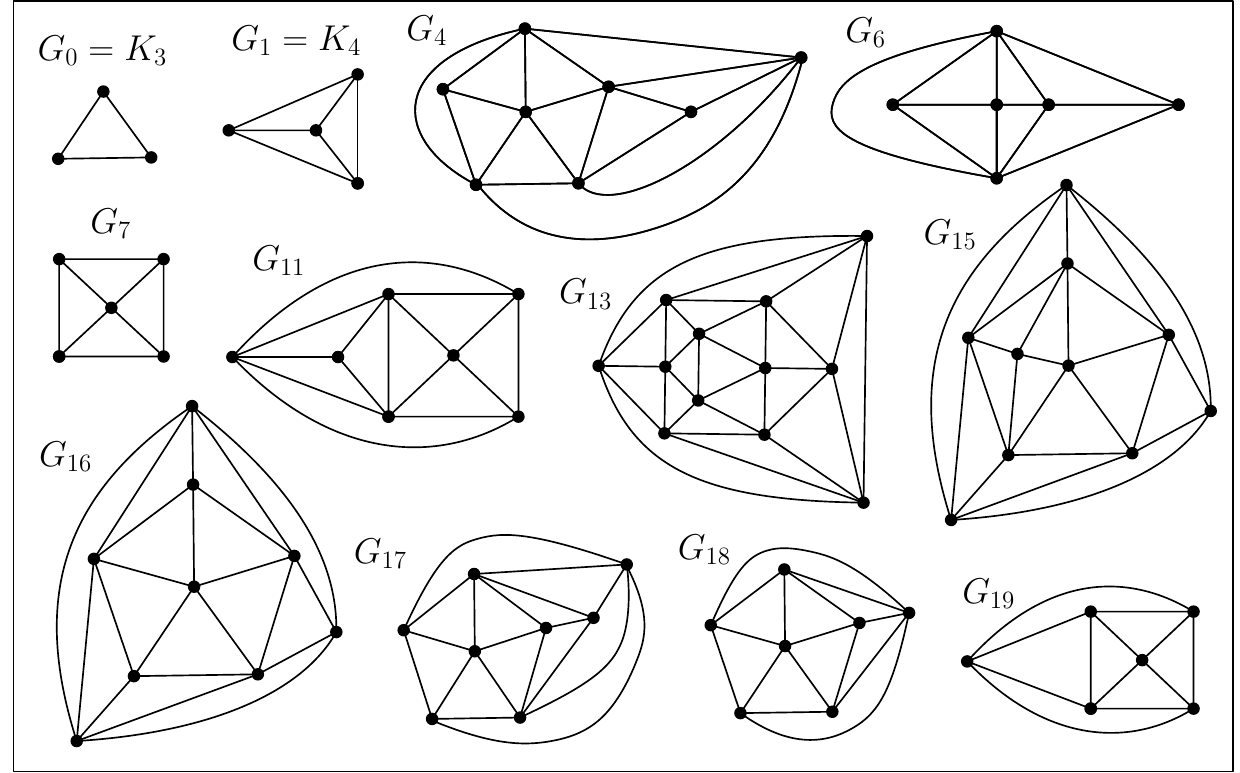}
    \caption{{The set $\mathcal{G}$ of all triangulations with $\Delta\leq 5$.}}
    \label{AllTriangulations}
  \end{figure}

It is important to note that there is an alternative to the argument we are about to present for Theorem \ref{Pics}: it can also be derived by computer, using the results of Brinkmann and McKay on \emph{plantri} (see \cite{BM}).

We start our work towards Theorem \ref{Pics} with the following observation and lemma. 

\begin{observation}
\label{samef}
\emph{Let $G$ be a triangulation with path $P=\{x,y,z\}$ for $x,y,z \in V(G)$. If $xy$ and $yz$ lay in the same face, then $x\sim z$.}
\end{observation}

\begin{lemma}
\label{5N(v)}
Let $G$ be a triangulation with $\Delta(G)=5$ and let $v\in V_3$. Then $N(v)$ contains no vertices of degree $3$ and at most one vertex of degree $4$.  Moreover, if $u\in N(v)\cap V_4$, then $u,v$ have two common neighbours $a,b \in V_5$, and $u$ has one other neighbour $c$, such that $a,b, c$ induces a triangle in $G$. See Figure \ref{V3V4pic}.
\end{lemma}

\begin{figure}[htb]
    \centering
    \includegraphics[height=3cm]{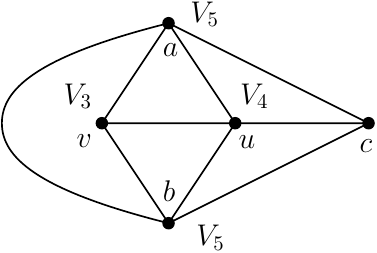}
    \caption{Adjacent vertices $v\in V_3$ and $u\in V_4$, as described in Lemma \ref{5N(v)}.}
    \label{V3V4pic}
  \end{figure}

\begin{proof}
Let $v \in V_3$ and let $G'=G-v$. Since $G$ is a triangulation we know $N(v)$ induces a triangle, so $G'$ is also a triangulation.

Suppose first that $v$ has some neighbor $u\in V_3$ in $G$. This means $deg_{G'}(u)=2$. Since $G'$ is a triangulation it must be that $G'=K_3$. However, this implies that $G=K_4$, which does not have maximum degree 5.

Suppose now that $v$ has some neighbor $u\in V_4$ in $G$. Let $N_G(v)=\{u, a, b\}$. Since $G$ is a triangulation, $N_G(v)$ induces a triangle, and hence $N_G(u)=\{v, a, b, c\}$ for some other vertex $c$. Moreover, since $u$ has degree 4, edges $bu, uc$ (and $au, uc$) are on the same face, meaning that  $ca, cb \in E(G)$ by Observation \ref{samef}. Also, since $v$ has degree 3, edges $av, vb$ are on the same face, and hence $ab\in E(G)$ by Observation \ref{samef}.
So we have precisely the situation pictured in Figure \ref{V3V4pic}, save that we have not yet argued that $a, b \in V_5$. 

Let $G''=G'-u=G-\{v,u\}$. We know $G'$ is a triangulation and that $deg_{G'}(u)=3$. This means $G''$ is also a triangulation. If either of $a$ or $b$, has degree $4$ in $G$, then they have degree 2 in $G''$. This would imply that $G''= K_3$, and hence that $G$ has no vertex of degree 5, contradiction. 
\end{proof}

In any triangulation, the neighbourhood of a vertex $x$ contains a cycle on which its neighbours appear, say in clockwise order, in the embedding around $x$. The following lemma says something about this ordering.

\begin{lemma}\label{nonadj} Let $G$ be a triangulation with $\Delta(G)=5$, and let $u, v$ be non-adjacent vertices in $G$. If $u,v$ have a common neighbour $y$, then they must have a second common neighbour $z$, and moreover $y,z$ must be consecutive neighbours in both the clockwise embedding of $N(u)$ and of $N(v)$.   
\end{lemma}

\begin{proof} If $u, v$ have precisely one common neighbour $y$, then $y$ has degree at least 6, owing to its adjacencies to $u,v$, to its two neighbours on the cycle around $N(u)$, and similarly in $N(v)$. See, for example, the left picture in Figure \ref{V3V4disj}. In fact, even if $u,v$ have a second common neighbour $z$, $y$ will still have degree at least 6, unless $z$ is consecutive to $y$ on both the cycles around $N(u)$ and $N(v)$. See the right picture in Figure \ref{V3V4disj}.
\end{proof}

\begin{figure}[htb]
    \centering
    \includegraphics[height=3cm]{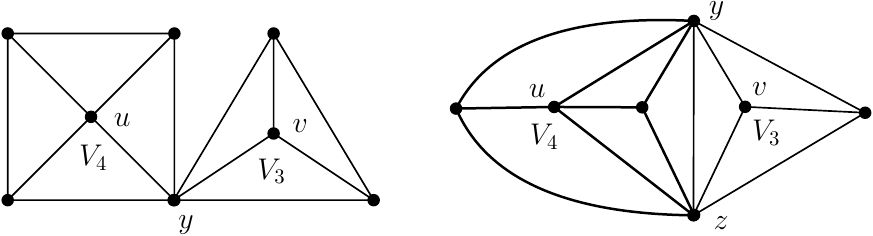}
    \caption{Two situations considered in Lemma \ref{nonadj}.}
    \label{V3V4disj}
  \end{figure}

\begin{lemma}\label{V3V4nonadj} Let $G$ be a triangulation with $\Delta(G)=5$ and let $v\in V_3$, $u\in V_4$ with $v\not\sim u$. Then either $N(u)\cap N(v)=\emptyset$, or $G$ is one of the triangulations $G_6, G_{11}$ pictured in Figure \ref{AllTriangulations}.
\end{lemma}

\begin{proof} If $N(u)\cap N(v)=\emptyset$, then we are done so assume $N(u)\cap N(v) \neq \emptyset$. By Lemma \ref{nonadj}, $|N(u)\cap N(v)|\in \{2,3\}$. Suppose first that $|N(u)\cap N(v)|=2$, say with common neighbors $a,b$ and with $c$ the third neighbor of $v$. We may assume by Lemma \ref{nonadj} that $a\sim b$, $a\sim c$, and $b\sim c$ with $a,b,c$ the clockwise order of the embedding of $N(v)$. Let $x,y$ be the other neighbors of $u$; we may assume by Lemma \ref{nonadj} that $a\sim b$, $a\sim x, x\sim y, y\sim b$ with $a, b, y, x$ the counter-clockwise order of the embedding of $N(u)$. See the left picture in Figure \ref{V3V4nonAdj}. Both $a,b$ have five neighbors among $N[v]\cup N[u]$, and hence edges $ac, ax$ are on the same face, and edges $bc, cy$ are on the same face. So by Observation \ref{samef}, $cx, cy \in E(G)$. However now $c$ also has 5 neighbours among $N[v]\cup N[u]$. Since $c, x, y$ is a triangle and $G$ is a triangulation, this means there are no other vertices or edges in the graph, and indeed the only possible triangulation is $G_{11}$ pictured in Figure \ref{AllTriangulations}.

\begin{figure}[ht]
    \centering
    \includegraphics[height=3cm]{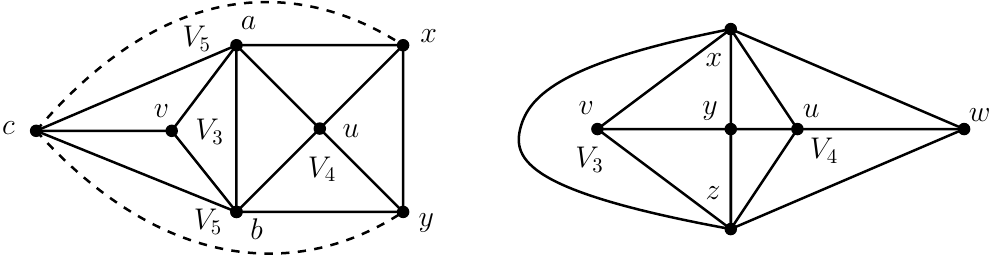}
    \caption{Two situations considered in the proof of Lemma \ref{V3V4nonAdj}.}
    \label{V3V4nonAdj}
  \end{figure}

We may now assume that $|N(u)\cap N(v)|=3$, say with common neighbors $x, y, z$. See the right picture in Figure \ref{V3V4nonAdj}. Then one of these vertices, say $y$, is forced to have degree 4 in $G$, and the other two, say $x,y$, have five neighbors among $N(v)\cap N(u)$. Moreover, if $w$ is the other neighbor of $u$, then $x, z, w$ forms a triangle. Since $G$ is a triangulation, this again means there are no other vertices of edges in the graph, and indeed the only possible triangulation is $G_{6}$ pictured in Figure \ref{AllTriangulations}.
\end{proof}

\begin{proposition}
\label{only8}
If $G$ is a triangulation with $\Delta(G)=5$ then $G$ has a degree sequence corresponding to one of the following eight cases: $4,6,11,13,15,16,17,18$. 
\end{proposition}

\begin{proof} Given our discussion following Table 1 (eliminating cases 8 and 14), and since cases 1, 7, 19 do not involve vertices of degree 5, it remains only for us to eliminate cases 2, 3, 5, 9, 10 and 12 as possible degree sequences.

Lemma \ref{5N(v)} says that every 3-vertex is adjacent to at least two 5-vertices. Both case 3 and case 12 have a 3-vertex, but only one 5-vertex, which makes this impossible. Case 2 can also be eliminated, since there $|V_3|=|V_5|=3$ (and $|V_4|=0$), so Lemma \ref{5N(v)} implies that $G$ contains a copy of $K_{3,3}$. 

We may now assume, for a contradiction, that $G$ has a degree sequence corresponding to case 5, 9, or 10.
In each case we can choose a pair of vertices $v\in V_3$ and $u\in V_4$. 

Suppose first that $v\sim u$. Then we get the situation described by Lemma \ref{5N(v)} and depicted in Figure \ref{V3V4pic} with $a, b, c \in V(G)$. Let $S=\{u, v, a, b, c\}$. Between $S$ and $V(G)\setminus S$ we will have one edge each from $a$ and $b$, and up to two edges from $c$, for at total of at most four edges. 

In case 5, $V(G)\setminus S$ consists of exactly two vertices (either two vertices of degree 5 or one vertex of degree 5 and one vertex of degree 3), and their degree requires at least $4+2=6$ edges from $S$, contradiction. In case 9, $V(G)\setminus S$ consists of 4 vertices, all of which have degree 5, so they need a total of at least $4(5-3)=8$ edges from $S$, contradiction. In case 10, $V(G)\setminus S$ consists of exactly three vertices (either all of degree 5, or two of degree 5 and one of degree 4), and their degree requires at least $3+3+2=8$ edges from $S$, contradiction.

We may now assume that $v\not\sim u$. Since neither of $G_6$ nor $G_{11}$ has a degree sequence corresponding to case 5, 9, or 10, we know by Lemma \ref{V3V4nonadj} that $N(v)\cap N(u)=\emptyset$. This means that $N[v]\cup N[u]$ contains 9 different vertices in $G$, immediately disqualifying cases 5 or 10 (which have a total of 7 and 8 vertices, respectively). Case 9 has exactly 9 vertices, meaning that $N[v]\cup N[u]=V(G)$. Moreover, all vertices besides $u,v$ have degree 5 in this case. We know that $N(u)$ contains a cycle, since $G$ is a triangulation. Each of the four vertices in $N(u)$ must have two additional neighbors, in addition to $u$ and its two neighbors on this cycle. By planarity, this means that the cycle in $N(u)$ is induced. However then, in order to satisfy the degree requirements of vertices in $N(u)$, there must be exactly $4(2)=8$ edges between $N(u)$ and $N(v)$. On the other hand, the vertices of $N(v)$ require exactly $3(2)=6$ edges, contradiction. 
\end{proof}

We can now establish Theorem \ref{Pics}.

\begin{proof} \emph{(Theorem \ref{Pics})}
First suppose that $\Delta(G)<5$. By Lemma \ref{count} this means that either $G=K_3\in \mathcal{G}$ or the degree sequence of of $G$ corresponds to one of the cases $\{1,7,19\}$. If $G$ has the degree sequence of case 1, then it must be $K_4$. If $G$ has the degree sequence of case 7 or case 19, then there is a $v \in V(G)$ such that $\deg(v)=4$. We know $v$ forms a 4-wheel with its neighbors since $G$ is a triangulation. If $G$ has the degree sequence of case 7, then there are no vertices outside this 4-wheel and joining two nonconsecutive neighbors of $v$ will yield $G_7\in\mathcal{G}$. If $G$ has the degree sequence of case 19, then there is 1 vertex outside the 4-wheel which must be joined to all neighbors of $v$ to yield $G_{19}\in\mathcal{G}$.

We may now assume that $G$ is a  triangulation with $\Delta(G)=5$; by Preposition \ref{only8} this means the degree sequence of $G$ corresponds to one of the following eight cases: $4,6,11,13,15,16,17,18$. It is already known that the icosohedron (which is $G_{13}\in\mathcal{G}$) is the  unique embedding of a 5-regular triangulation, and we won't repeat this argument here.

Suppose that $G$ has the degree sequence of case 6 or case 11. Then we have $|V_3|\geq 1$, $|V_4|\geq2$, and $v(G)\in\{6,7\}$. Let $v\in V_3$; by Lemma \ref{5N(v)} we know there is a $u\in V_4$ such that $u\not\sim v$. In particular, this means that we cannot have $N(v)\cap N(u)=\emptyset$ (since that would require at least 4+5=9 vertices in $G$). So by Lemma \ref{V3V4nonadj}, $G \in\{G_6, G_{11}\}\subseteq \mathcal{G}$ .

Now suppose that $G$ has the degree sequence of case 4, 15, 16, 17, or 18. Note that $|V(G)|\in\{7, 8, 9, 10\}$. Choose $w\in V_5$ and let $w_1, \ldots, w_5$ be the clockwise neighbours of $w$, appearing along a cycle $C$. Since $|V(G)|>6$ there exists $u\in V(G)$ with $u\not\sim w$. If $u, w$ have no common neighbours then $N[u]$ accounts for an extra 4 to 6 vertices in the graph. However, this is only possible with $|V(G)|=10$ and $\deg(u)=3$. These two values cannot happen simultaneously since the only case with $V_3\neq 0$ is case 4, and it has only 8 total vertices. So $u$ and $w$ must have common neighbors. By Lemma \ref{nonadj} we may assume that, without loss of generality, $u\sim w_1, w_2$, and moreover, $N(w)\cap N(u)=\{w_1, w_2, \ldots, w_i\}$ for some $2\leq i\leq 5$. 

Suppose first that $i=5$. Then $u, w$ both have degree 5, $w_1, \ldots, w_5$ have degree 4, and $G$ must look like the graph $G_{18}\in\mathcal{G}$.

We may now assume that $2\leq i\leq 4$. Then $w_1$ and $w_i$ have degree 5 in $N[w]\cap N[u]$ due  to their adjacencies to $u$, $w$ and to the two vertices consecutive to them around both $N(u)$ and $N(w)$. We also know $w_{2}, \ldots, w_{i-1}$ (which is possibly an empty list), all have degree 4. See, for example, Figure \ref{w1wi}.

\begin{figure}[hb]
    \centering
    \includegraphics[height=2.2cm]{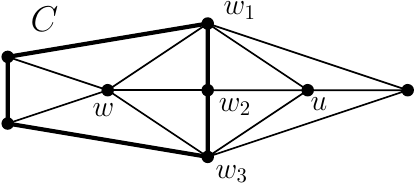}
    \caption{An example where $i=3$.}
    \label{w1wi}
  \end{figure}

Suppose now that we are in case 4. Since $V_4=\emptyset$, we know that $i=2$. We claim that we can choose our afore-mentioned $u\not\sim w$ so that $u\in V_3$. The only issue with this would be if $C$ contains both $V_3$ vertices, which we claim cannot happen. Otherwise, let $U$ be the two vertices of $G$ not in $N[w]$. We would have at most $6$ edges between $C$ and $U$ (at most two from each of the three 5-vertices on $C$). On the other hand, $U$ consists of two 5-vertices in this case, meaning that there are at least $8$ edges between $U$ and $C$, contradiction. So, we may indeed choose $u\not\sim w$ with $u\in V_3$.  Let $N(u)=\{w_1, w_2, x\}$ for some $x\in V(G)$. See Figure \ref{G4casePlus}(a). Since $w_1$ has degree 5, $w_5w_1$ and $w_1x$ lie on the same face, so by Observation \ref{samef}, $w_5 f \in E(G)$. Similarly, $w_3x\in E(G)$. Since now $x$ has degree 5 and $w_5, w_3$ have degree at least 4, it must be that $w_4$ is the other vertex of degree 3 in $G$. But then $w_3w_4, w_4w_5$ lie on the same face, so $w_3w_5 \in E(G)$, and $G$ must look like the graph $G_4\in\mathcal{G}$.

\begin{figure}[h]
    \centering
    \includegraphics[height=14cm]{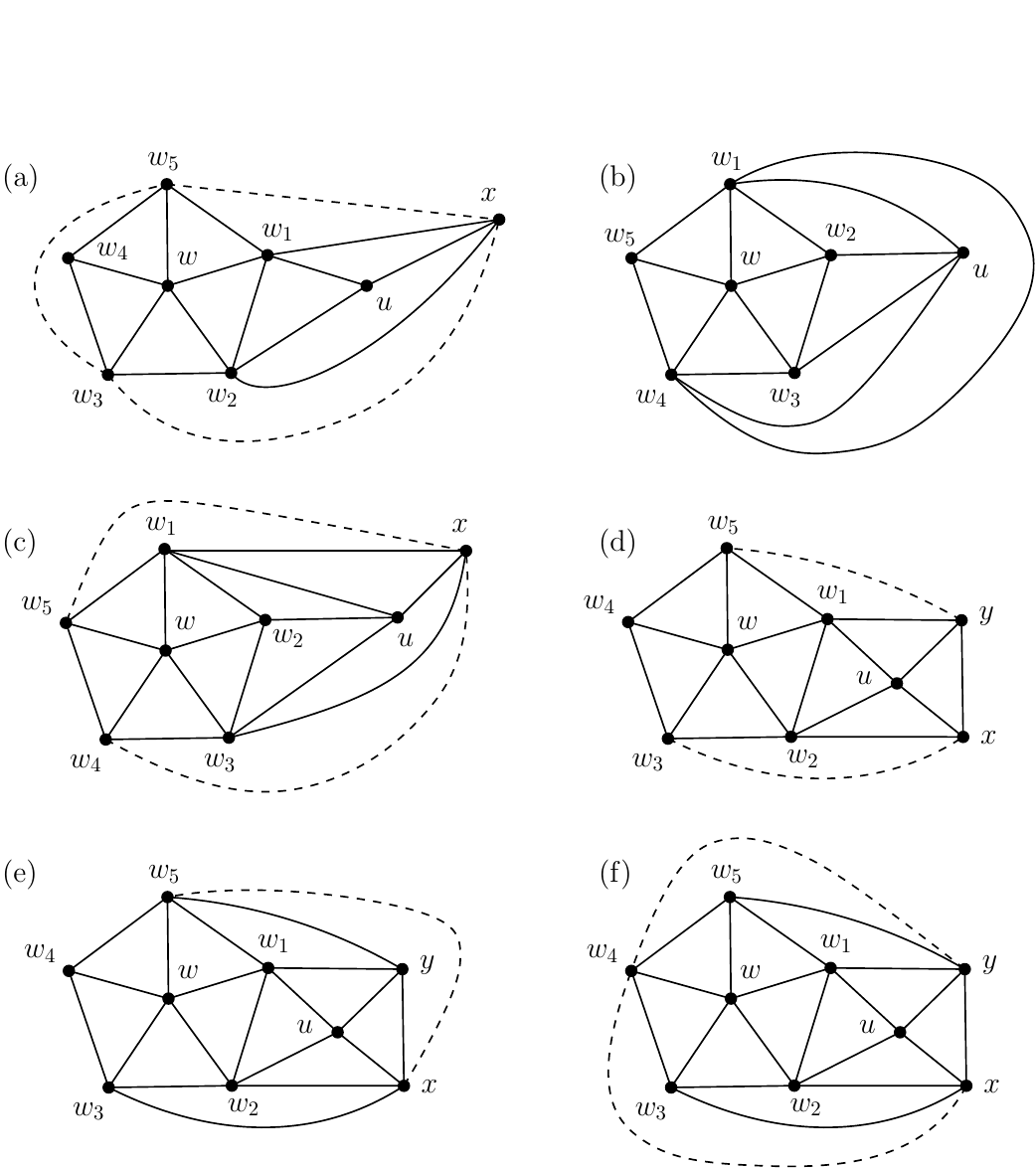}
    \caption{Various situations considered in the proof of Theorem \ref{Pics}.}
    \label{G4casePlus}
  \end{figure}

We may now assume we are in case 15, 16, 17, or 18. We claim that either we may choose our previously discussed $u\not\sim w$ so that $u\in V_4$, or that we have $G=G_{18}$. Since each vertex of degree 4 can be adjacent to at most four vertices of degree 5, having $(|V_4|, |V_5|)$ be $(2,8)$ or $(3,6)$ (as in Cases 15 and 16) means this choice of $u$ is certainly possible. Having this value be $(4,4)$ as in case 17 also means this is possible, since while two vertices of degree 4 may share an identical neighborhood of 4 vertices of degree 5, planarity will not allow for our vertices of degree 4 to have such identical neighborhoods. In Case 18 it could be possible for the two 5-vertices to have identical neighborhoods, consisting of five 4-vertices, but then we get exactly $G_{18}$. So we indeed have our claim.

We may now choose the vertex $u\not\sim w$ with $u\in V_4$. So $i\in\{2, 3, 4\}$. Suppose first that $i=4$. Then $w_1, w_4$ have degree 5, so $w_4w_5, w_5w_1$ are on the same face, and hence $w_5$ has degree 3. See Figure \ref{G4casePlus}(b). But this contradicts the fact that we are in one of cases 15, 16, 17, or 18, where $V_3=\emptyset$. Suppose next that $i=3$. Let $N(u)=\{w_1, w_2, w_3, x\}$ for some $x\in V(G)$. See Figure \ref{G4casePlus}(c). Then $w_1$ has degree 5, so $w_5w_1, w_1x$ are on the same face, meaning $w_5x\in E(G)$. Similarly, since $w_4$ has degree 5, $w_4x\in E(G)$. We now have $x$ of degree 5, and $x, w_5, w_4$ forming a triangle in $G$. Hence $V(G)=N[u]\cup N[v]$ and $G$ looks exactly like $G_{17}\in\mathcal{G}$.

We may now assume that $i=2$. Let $N(u)=\{w_1, w_2, x, y\}$ with this being the counter-clockwise order around $u$. Then $w_1, w_2$ have degree 5, so $yw_5, xw_3\in E(G)$. See Figure \ref{G4casePlus}(d). Note that $|N[u]\cup N[w]|=9$, meaning that we must be in either case 15 (10 vertices) or case 16 (9 vertices). 

Suppose first that we are in case 15, meaning there exists one more vertex $z\in V(G)\setminus N[u]\cup N[w]$. Note that, by our above arguments, we may in fact assume that all non-adjacent 4-vertices and 5-vertices in $G$ share exactly two common neighbors. This means that if $z$ has degree 4, it must be adjacent to two neighbours of $w_1$ and two neighbours of $w_2$, meaning its neighbour set is precisely $w_5,y, x, w_3$. But then this forces $w_4$ to be degree 3 in $G$, contradiction. Hence $z$ must have degree 5, with neighbors  $w_5, y, x, w_3,  w_4$. But then $G$ must be exactly the graph $G_{15}\in\mathcal{G}$.

Suppose finally that we are in case 16, meaning that $V(G)=N[u]\cup N[w]$.  Since there are no degree 3 vertices in case 16, we may assume wlog that $x\sim w_4$. See Figure \ref{G4casePlus}(f). Since there are exactly three 4-vertices in case 16, exactly one of $w_4, w_5, y$ must have degree 4. Since $w_5$ is already adjacent to $w_4, y$, it must be $w_5$ that has degree 4, and $w_4, y$ must be adjacent. Hence $G$ looks exactly like $G_{16}\in\mathcal{G}$.
\end{proof}

\section{List-edge-coloring the triangulations}

%Given a graph $G$, we define the \textit{graph polynomial} on the variables $x_v$ for $v \in V(G)$ as $$P_G=\prod_{uv \in E(\vec{G})} (x_v-x_u),$$
%where $\vec{G}$ is any orientation of $G$; note that any orientation will result in $P_G$ or $-P_G$, and this difference in sign will not matter for our purposes. Given some $P_G$ and some $f:V(G)\rightarrow \mathbb{N}$, by $[x^f]P_G$ we mean the coefficient of the monomial in $P_G$ where $x_v$ has exponent $f(v)$, for all $v\in V(G)$.

Let $G$ be a graph with $E(G)=\{e_1, \ldots, e_m\}$. Define a polynomial $p_G$ on the variables $x_i$ for $1\leq i \leq m$ as $$p_G=\prod_{e_i \sim e_j, i<j} (x_i-x_j),$$
where by $e_i \sim e_j$ we mean that these edges are adjacent in $G$.
Given some $p_G$ and some $f:E(G)\rightarrow \mathbb{N}$, by $[x^f]p_G$ we mean the coefficient of the monomial in $p_G$ where $x_i$ has exponent $f(e_i)$, for $1\leq i\leq m$. Given some $e\in E(G)$, by $[x^{f+1_e}]p_G$ we mean the same thing as $[x^f]p_G$ except with the exponent of the variable corresponding to $e$ increased by one. These quantities are significiant when considering the list-edge-colorability of a graph, as evidenced by the following theorems.

\begin{theorem}[Alon and Tarsi \cite{AT}]\label{AT}
    Let $G$ be a graph and let $f:E(G)\rightarrow \{0, 1, \ldots, k-1\}$. If $[x^f]p_G\neq 0$, then $G$ is $k$-list-edge-colorable.
\end{theorem}

\begin{theorem}[Dvo\v{r}\'{a}k \cite{Dv}]\label{extAT}
    Let $G$ be a graph, let $L$ be an edge-list-assignment with $|L_e|=k$ for all $e\in E(G)$, and let $f:E(G)\rightarrow \{0, 1, \ldots, k-1\}$ be such that $\sum_{e\in E(G)} f(e)=\sum_{v\in V(G)}{\deg(v) \choose 2}-1$. If $G$ is not $L$-edge-colorable, then for every color c, $$\sum_{e\in E(G)} [x^{f+1_e}]p_G \cdot \chi_{L, c}(e)= 0,$$
    where $\chi_{L, c}(e)$ is 1 if $c\in L_e$ and 0 otherwise.
\end{theorem}

It is worth noting that Theorems \ref{AT} and \ref{extAT} are simplified  versions of what appeared in \cite{AT} and \cite{Dv}, tailored to suit our needs here. In particular, these are the line-graph versions of more general theorems about vertex-coloring, and in the more general versions the lists do not need to have a constant size. Despite this simplification, there are two fundamental challenges in applying Theorem \ref{AT} successfully: one must actually compute the relevant coefficients of the polynomial in question and, if all such coefficients are zero, we do not get any information on whether the graph is $k$-list-edge-colorable or not. In \cite{Dv}, Dvo\v{r}\'{a}k helps on both of these fronts by providing an algorithm to find the coefficients of Theorem \ref{AT} that is more efficient in practice than direct computation which also incorporates Theorem \ref{extAT} when needed.

Note that in order to get $[x^f]p_G\neq 0$ in Theorem \ref{AT}, we must have $\sum_{e\in E(G)}f(e)=\sum_{v\in V(G)}{\deg(v) \choose 2}$, since every term in $p_G$ will have this same total degree. However, if all such functions $f$ result in $[x^f]p_G= 0$ (i.e. Theorem \ref{AT} gives no information), then Theorem \ref{extAT} tells us to look at $f$ with total sum one less to potentially get valuable information. In this situation each monomial considered by $[x^{f+1_e}]p_G$ will have the same total degree as before, but one exponent may be as large as $k$. Theorem \ref{extAT} uses the coefficients of these monomials to produce a system of linear equations in the variables $\chi_{L, c}(e)$ for all $e\in E(G)$ (with one equation for each appropriate $f$), which can be used to determine information about a potentially ``bad''  list-assignment $L$. Such potentially bad $L$ would then be output by Dvo\v{r}\'{a}k's algorithm. Of course, if there are only a small number of possibly bad $L$ output by the algorithm, then it may be possible to directly check these to determine whether the graph is $k$-list-edge-colorable or not. In the end, we will see that this outcome does not happen when we apply Dvo\v{r}\'{a}k's algorithm to the particular graphs we are interested in.

\setcounter{theorem}{2}
\begin{theorem} If $G$ is a triangulation with $\Delta \leq 5$, then $\chi'_l(G)=\chi'(G).$
\end{theorem}

\begin{proof} By Theorem \ref{Pics}, $\mathcal{G}$ is the set of all triangulations with $\Delta\leq 5$. Some of these graphs are already known to satisfy the LECC, in particular $G_0=K_3$, $G_1=K_4$, and  the icosahedron, $G_{13}$ (Ellingham and Goddyn \cite{EG}). Since $\chi'_\ell(K_6)=5$ is known (Cariolario, Cariolario, Schauz, and Sun \cite{CCSS}) and $G_6\subset K_6$ with $\chi'(G_G)\geq \Delta(G_6)=5$, we immediately get that $G_6$ satisfies the LECC.

It remains now for us to verify the LECC for the graphs in $\mathcal{G}\setminus\{G_0, G_1, G_6, G_{13}\}$. Of these, only two ($G_7$ and $G_{19}$) have maximum degree $4$ (see Figure \ref{AllTriangulations}), and we will show they are 4-list-edge-colorable. The rest of the graphs have maximum degree 5, and we will show they are all 5-list-edge-colorable, except for $G_{16}$. Since $G_{16}$ has 9 vertices and 21 edges, $\chi(G_{16})\geq \frac{21\cdot 2}{8}>5$; we will show that $G_{16}$ is 6-list-edge-colorable.

The two graphs we plan to show are 4-list-edge-colorable, $G_7$ and $G_{19}$, have the property that $G_{7}\subset G_{19}$ (see Figure \ref{AllTriangulations}). Therefore it suffices to just verify that $G_{19}$ is 4-list-edge-colorable. Similarly, we claim that two of the graphs we plan to show are 5-list-edge-colorable, $G_4$ and $G_{11}$, satisfy $G_{11}\subset G_4$. To see this, first note that $G_4$ has $|V_3|=2$ and $|V_5|=6$ (see Table 1) and its vertices of degree 3 are non-adjacent (see Figure 1). By deleting either vertex of degree 3 in $G_4$, we get another triangulation with $|V_3|=1, |V_4|=3, |V_5|=3$, which by Theorem \ref{Pics} must be $G_{11}$.  Therefore in order to show that $G_4$ and $G_{11}$ are 5-list-edge-colorable, it suffices to just check $G_{4}$.

We now apply Dvo\v{r}\'{a}k's afore-mentioned algorithm to $G_i$ for each $i\in\{4, 15,16,17,18,19\}$. This algorithm is available at 
\begin{center}\url{https://gitlab.mff.cuni.cz/dvorz9am/alon-tarsi-method},\end{center} with a link also included in the abstract of \cite{Dv}. The algorithm can be complied using g++ on Linux. Since the algorithm is intended to consider list-vertex-coloring, we must take the line graph of each of $G_i$ for each $i\in\{4, 15,16,17,18,19\}$ before inputting them into the algorithm. For this prepossessing step, we label the vertices of our graphs as shown in Figure \ref{LabelledTriangulations} and use Algorithm 1 printed in Appendix A. This algorithm is run in Sage by taking each of our labeled graphs and our desired list-size for each graph as an input. The output of Algorithm 1 is the required line graph input for Dvo\v{r}\'{a}k's algorithm, in the correct format, so we can apply it at that point. It is worth noting that we used Dvo\v{r}\'{a}k's code verbatim except for changing his parameter MAX\_VERTICES from 21 to 24 to accommodate our larger graphs.

\begin{figure}[htb]
    \centering
    \includegraphics[height=7cm]{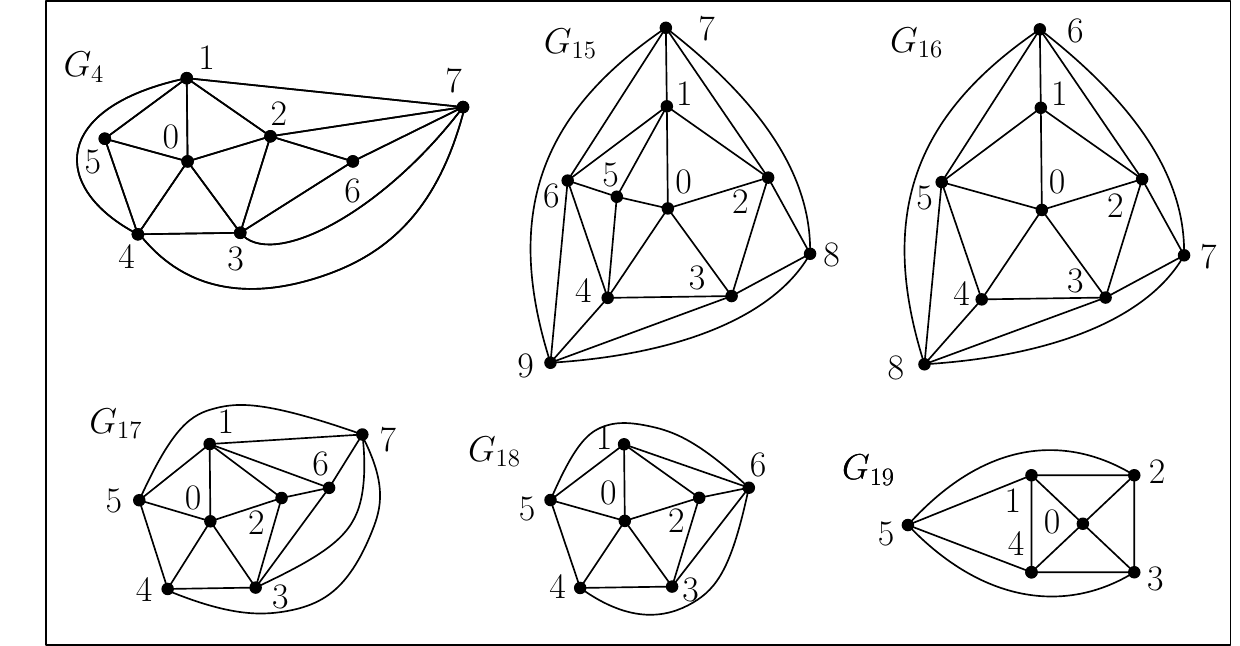}
    \caption{Some labelled triangulations from $\mathcal{G}$.}
    \label{LabelledTriangulations}
  \end{figure}

For each $G_i$ with $i\in\{4,15,16,17,18,19\}$, the input and output of Algorithm 1, as well as the printed output of Dvo\v{r}\'{a}k's algorithm, are listed a Computations \ref{comp4} through \ref{comp19} in Appendix B. In the output of Algorithm 1 (i.e.~input to Dvo\v{r}\'{a}k's algorithm), note that we have used the endline symbol to denote an actual line break, just for space efficiency in this paper. In each of the computations, the output of  Dvo\v{r}\'{a}k's algorithm is: ``Always colorable (using standard Alon-Tarsi)''.
\end{proof}

As a final note, we remark that Dvo\v{r}\'{a}k's algorithm has succeeded where two other approaches failed. When trying to determine if the graphs $\mathcal{G}$ satisfied the list-edge-coloring conjecture, the present authors first employed the so-called ``kernel method,"  initiated by Galvin \cite{G} and generalized by Borodin, Kostochka, and Woodall (\cite{BKW} and \cite{bkw2}). Included in the first author's PhD thesis \cite{jh} are the details of using this method to show that $G_{11}$, $G_{17}$, and $G_{18}$ satisfy Vizing's conjecture, but this method did not work for $G_4,G_{15},G_{16}, \text{and } G_{19}$ even to verify this weaker version of LECC. Also included in this dissertation are computations that show that some of these graphs satisfy the LECC. These computations use an implementation by Schauz \cite{S}, based on his extension \cite{s2} of Theorem \ref{AT}.  Schauz's algorithm is for regular graphs only, but by embedding our graphs in regular graphs, one can show that the LECC holds for $G_4,G_{11},G_{16},G_{17}, \text{and } G_{19}$. Schausz's algorithm was also able to show that $G_{15}$ and $G_{18}$ satisfy Vizing's conjecture, but it fell short to show they satisfy LECC.

\section*{Acknowledgements}

We thank Jonathan Joe at Middle Georgia State University for his computing expertise. We also thank an anonymous referee for their insightful comments.

\appendix
\section{Algorithm 1}

%\jedit{The following Sage code takes input variables \textit{edges} and \textit{listsize}. The edges variable is a graph $G$ with $m$ edges in the form $[(u_1,v_1),...,(u_m,v_m)]$. The listsize variable is the listsize given to all edges of $G$. The output is the line graph of $G$ in the form: $M \ E // \ell_1 \ ... \ \ell_M // U_1 \ V_1 \ ... \ U_E \ V_E//$ where $M=e(G)=v(L(G))$, $E=e(L(G))$, $\ell_i$ is the list size of the $i^{th}$ vertex, and $U_i \ V_i$ is the $i^{th}$ edge of $L(G)$.}

\vspace{.5cm}

\noindent {\large \textbf{Algorithm 1}}
\small
\begin{verbatim}
edges = [(0,1),(0,2),(0,3),(0,4),(1,2),(2,3),(3,4),(1,4),(1,5),(2,5),(3,5),(4,5)]
listsize = 5

G = Graph(edges)
edges_dict = {}
counter = 0
for edge in G.edges(labels=False):
    edges_dict[edge] = counter
    counter += 1

H=G.line_graph(labels=False)
H.edges(labels=False)

h_prime_edges = []
for edge in H.edges(labels=False):
    v_1 = edge[0]
    v_2 = edge[1]
    n_1 = edges_dict[v_1]
    n_2 = edges_dict[v_2]
    h_prime_edges.append((n_1,n_2))

H_prime = Graph(h_prime_edges)
print(len(H_prime.vertices()), len(H_prime.edges()))
M=[listsize for n in range(0,len(H_prime.vertices()))]
print(*M)
for edge in H_prime.edges(labels=False):
    print('{} {}'.format(edge[0],edge[1]))
\end{verbatim}

\normalsize

\normalsize

%\jedit{We perform computations on six graphs by inputing their edges and list-size into Algorithm 1, saving the output as a text document, inputting the text document into Dvo\v{r}\'{a}k's algorithm, and noting the result. Below are the inputs and outputs of these steps for each of the six graphs.}

\section{Computations}

\begin{computation}[$G_4$]
\label{comp4}
\emph{\phantom{m}\\
Algorithm 1 edges: [(0,1),(0,2),(0,3),(0,4),(0,5),(1,2),(2,3),(3,4),(4,5),(1,5),(2,6),(3,6),(6,7),(2,7),(3,7),(1,7),\newline (4,7)]\\
Algorithm 1 listsize: 5\\
Algorithm 1 output:17 58 \textbackslash \textbackslash
5 5 5 5 5 5 5 5 5 5 5 5 5 5 5 5 5 \textbackslash \textbackslash
0 1 \textbackslash \textbackslash
0 2 \textbackslash \textbackslash
0 3 \textbackslash \textbackslash
0 4 \textbackslash \textbackslash
0 5 \textbackslash \textbackslash
0 6 \textbackslash \textbackslash
0 7 \textbackslash \textbackslash
1 2 \textbackslash \textbackslash
1 3 \textbackslash \textbackslash
1 4 \textbackslash \textbackslash
1 5 \textbackslash \textbackslash
1 8 \textbackslash \textbackslash
1 9 \textbackslash \textbackslash
1 10 \textbackslash \textbackslash
2 3 \textbackslash \textbackslash
2 4 \textbackslash \textbackslash
2 8 \textbackslash \textbackslash
2 11 \textbackslash \textbackslash
2 12 \textbackslash \textbackslash
2 13 \textbackslash \textbackslash
3 4 \textbackslash \textbackslash
3 11 \textbackslash \textbackslash
3 14 \textbackslash \textbackslash
3 15 \textbackslash \textbackslash
4 6 \textbackslash \textbackslash
4 14 \textbackslash \textbackslash
5 6 \textbackslash \textbackslash
5 7 \textbackslash \textbackslash
5 8 \textbackslash \textbackslash
5 9 \textbackslash \textbackslash
5 10 \textbackslash \textbackslash
6 7 \textbackslash \textbackslash
6 14 \textbackslash \textbackslash
7 10 \textbackslash \textbackslash
7 13 \textbackslash \textbackslash
7 15 \textbackslash \textbackslash
7 16 \textbackslash \textbackslash
8 9 \textbackslash \textbackslash
8 10 \textbackslash \textbackslash
8 11 \textbackslash \textbackslash
8 12 \textbackslash \textbackslash
8 13 \textbackslash \textbackslash
9 10 \textbackslash \textbackslash
9 12 \textbackslash \textbackslash
9 16 \textbackslash \textbackslash
10 13 \textbackslash \textbackslash
10 15 \textbackslash \textbackslash
10 16 \textbackslash \textbackslash
11 12 \textbackslash \textbackslash
11 13 \textbackslash \textbackslash
11 14 \textbackslash \textbackslash
11 15 \textbackslash \textbackslash
12 13 \textbackslash \textbackslash
12 16 \textbackslash \textbackslash
13 15 \textbackslash \textbackslash
13 16 \textbackslash \textbackslash
14 15 \textbackslash \textbackslash
15 16 \textbackslash \textbackslash\\
Dvo\v{r}\'{a}k output:Always colorable (using standard Alon-Tarsi)\\
}
\end{computation}

\begin{computation}[$G_{15}$]
\label{comp15}
\emph{\phantom{m}\\
Algorithm 1 edges: [(0,1),(0,2),(0,3),(0,4),(0,5),(1,2),(2,3),(3,4),(4,5),(1,5),(1,6),(4,6),(5,6),(6,7),(6,9),(1,7),\newline (2,7),(7,8),(7,9),(2,8),(3,8),(8,9),(3,9),(4,9)]\\
Algorithm 1 listsize: 5\\
Algorithm 1 output:24 92 \textbackslash \textbackslash
5 5 5 5 5 5 5 5 5 5 5 5 5 5 5 5 5 5 5 5 5 5 5 5 \textbackslash \textbackslash
0 1 \textbackslash \textbackslash
0 2 \textbackslash \textbackslash
0 3 \textbackslash \textbackslash
0 4 \textbackslash \textbackslash
0 5 \textbackslash \textbackslash
0 6 \textbackslash \textbackslash
0 7 \textbackslash \textbackslash
0 8 \textbackslash \textbackslash
1 2 \textbackslash \textbackslash
1 3 \textbackslash \textbackslash
1 4 \textbackslash \textbackslash
1 5 \textbackslash \textbackslash
1 9 \textbackslash \textbackslash
1 10 \textbackslash \textbackslash
1 11 \textbackslash \textbackslash
2 3 \textbackslash \textbackslash
2 4 \textbackslash \textbackslash
2 9 \textbackslash \textbackslash
2 12 \textbackslash \textbackslash
2 13 \textbackslash \textbackslash
2 14 \textbackslash \textbackslash
3 4 \textbackslash \textbackslash
3 12 \textbackslash \textbackslash
3 15 \textbackslash \textbackslash
3 16 \textbackslash \textbackslash
3 17 \textbackslash \textbackslash
4 6 \textbackslash \textbackslash
4 15 \textbackslash \textbackslash
4 18 \textbackslash \textbackslash
5 6 \textbackslash \textbackslash
5 7 \textbackslash \textbackslash
5 8 \textbackslash \textbackslash
5 9 \textbackslash \textbackslash
5 10 \textbackslash \textbackslash
5 11 \textbackslash \textbackslash
6 7 \textbackslash \textbackslash
6 8 \textbackslash \textbackslash
6 15 \textbackslash \textbackslash
6 18 \textbackslash \textbackslash
7 8 \textbackslash \textbackslash
7 16 \textbackslash \textbackslash
7 18 \textbackslash \textbackslash
7 19 \textbackslash \textbackslash
7 20 \textbackslash \textbackslash
8 10 \textbackslash \textbackslash
8 19 \textbackslash \textbackslash
8 21 \textbackslash \textbackslash
8 22 \textbackslash \textbackslash
9 10 \textbackslash \textbackslash
9 11 \textbackslash \textbackslash
9 12 \textbackslash \textbackslash
9 13 \textbackslash \textbackslash
9 14 \textbackslash \textbackslash
10 11 \textbackslash \textbackslash
10 19 \textbackslash \textbackslash
10 21 \textbackslash \textbackslash
10 22 \textbackslash \textbackslash
11 13 \textbackslash \textbackslash
11 21 \textbackslash \textbackslash
11 23 \textbackslash \textbackslash
12 13 \textbackslash \textbackslash
12 14 \textbackslash \textbackslash
12 15 \textbackslash \textbackslash
12 16 \textbackslash \textbackslash
12 17 \textbackslash \textbackslash
13 14 \textbackslash \textbackslash
13 21 \textbackslash \textbackslash
13 23 \textbackslash \textbackslash
14 17 \textbackslash \textbackslash
14 20 \textbackslash \textbackslash
14 22 \textbackslash \textbackslash
14 23 \textbackslash \textbackslash
15 16 \textbackslash \textbackslash
15 17 \textbackslash \textbackslash
15 18 \textbackslash \textbackslash
16 17 \textbackslash \textbackslash
16 18 \textbackslash \textbackslash
16 19 \textbackslash \textbackslash
16 20 \textbackslash \textbackslash
17 20 \textbackslash \textbackslash
17 22 \textbackslash \textbackslash
17 23 \textbackslash \textbackslash
18 19 \textbackslash \textbackslash
18 20 \textbackslash \textbackslash
19 20 \textbackslash \textbackslash
19 21 \textbackslash \textbackslash
19 22 \textbackslash \textbackslash
20 22 \textbackslash \textbackslash
20 23 \textbackslash \textbackslash
21 22 \textbackslash \textbackslash
21 23 \textbackslash \textbackslash
22 23 \textbackslash \textbackslash\\
Dvo\v{r}\'{a}k output:Always colorable (using standard Alon-Tarsi)\\
}
\end{computation}

\begin{computation}[$G_{16}$]
\label{comp16}
\emph{\phantom{m}\\
Algorithm 1 edges:	[(0,1),(0,2),(0,3),(0,4),(0,5),(1,2),(2,3),(3,4),(4,5),(1,5),(5,6),(1,6),(2,6),(2,7),(3,7),(3,8),\newline (4,8),(5,8),(6,7),(7,8),(6,8)]\\
Algorithm 1 listsize: 6\\
Algorithm 1 output:21 78 \textbackslash \textbackslash
6 6 6 6 6 6 6 6 6 6 6 6 6 6 6 6 6 6 6 6 6 \textbackslash \textbackslash
0 1 \textbackslash \textbackslash
0 2 \textbackslash \textbackslash
0 3 \textbackslash \textbackslash
0 4 \textbackslash \textbackslash
0 5 \textbackslash \textbackslash
0 6 \textbackslash \textbackslash
0 7 \textbackslash \textbackslash
1 2 \textbackslash \textbackslash
1 3 \textbackslash \textbackslash
1 4 \textbackslash \textbackslash
1 5 \textbackslash \textbackslash
1 8 \textbackslash \textbackslash
1 9 \textbackslash \textbackslash
1 10 \textbackslash \textbackslash
2 3 \textbackslash \textbackslash
2 4 \textbackslash \textbackslash
2 8 \textbackslash \textbackslash
2 11 \textbackslash \textbackslash
2 12 \textbackslash \textbackslash
2 13 \textbackslash \textbackslash
3 4 \textbackslash \textbackslash
3 11 \textbackslash \textbackslash
3 14 \textbackslash \textbackslash
3 15 \textbackslash \textbackslash
4 6 \textbackslash \textbackslash
4 14 \textbackslash \textbackslash
4 16 \textbackslash \textbackslash
4 17 \textbackslash \textbackslash
5 6 \textbackslash \textbackslash
5 7 \textbackslash \textbackslash
5 8 \textbackslash \textbackslash
5 9 \textbackslash \textbackslash
5 10 \textbackslash \textbackslash
6 7 \textbackslash \textbackslash
6 14 \textbackslash \textbackslash
6 16 \textbackslash \textbackslash
6 17 \textbackslash \textbackslash
7 9 \textbackslash \textbackslash
7 16 \textbackslash \textbackslash
7 18 \textbackslash \textbackslash
7 19 \textbackslash \textbackslash
8 9 \textbackslash \textbackslash
8 10 \textbackslash \textbackslash
8 11 \textbackslash \textbackslash
8 12 \textbackslash \textbackslash
8 13 \textbackslash \textbackslash
9 10 \textbackslash \textbackslash
9 16 \textbackslash \textbackslash
9 18 \textbackslash \textbackslash
9 19 \textbackslash \textbackslash
10 12 \textbackslash \textbackslash
10 18 \textbackslash \textbackslash
10 20 \textbackslash \textbackslash
11 12 \textbackslash \textbackslash
11 13 \textbackslash \textbackslash
11 14 \textbackslash \textbackslash
11 15 \textbackslash \textbackslash
12 13 \textbackslash \textbackslash
12 18 \textbackslash \textbackslash
12 20 \textbackslash \textbackslash
13 15 \textbackslash \textbackslash
13 17 \textbackslash \textbackslash
13 19 \textbackslash \textbackslash
13 20 \textbackslash \textbackslash
14 15 \textbackslash \textbackslash
14 16 \textbackslash \textbackslash
14 17 \textbackslash \textbackslash
15 17 \textbackslash \textbackslash
15 19 \textbackslash \textbackslash
15 20 \textbackslash \textbackslash
16 17 \textbackslash \textbackslash
16 18 \textbackslash \textbackslash
16 19 \textbackslash \textbackslash
17 19 \textbackslash \textbackslash
17 20 \textbackslash \textbackslash
18 19 \textbackslash \textbackslash
18 20 \textbackslash \textbackslash
19 20 \textbackslash \textbackslash\\
Dvo\v{r}\'{a}k output:Always colorable (using standard Alon-Tarsi)\\
}
\end{computation}

\begin{computation}[$G_{17}$]
\label{comp17}
\emph{\phantom{m}\\
Algorithm 1 edges:	[(0,1),(0,2),(0,3),(0,4),(0,5),(1,2),(2,3),(3,4),(4,5),(1,5),(1,6),(2,6),(3,6),(1,7),(3,7),(4,7),\newline (5,7),(6,7)]\\
Algorithm 1 listsize: 5\\
Algorithm 1 output:18 64 \textbackslash \textbackslash
5 5 5 5 5 5 5 5 5 5 5 5 5 5 5 5 5 5 \textbackslash \textbackslash
0 1 \textbackslash \textbackslash
0 2 \textbackslash \textbackslash
0 3 \textbackslash \textbackslash
0 4 \textbackslash \textbackslash
0 5 \textbackslash \textbackslash
0 6 \textbackslash \textbackslash
0 7 \textbackslash \textbackslash
0 8 \textbackslash \textbackslash
1 2 \textbackslash \textbackslash
1 3 \textbackslash \textbackslash
1 4 \textbackslash \textbackslash
1 5 \textbackslash \textbackslash
1 9 \textbackslash \textbackslash
1 10 \textbackslash \textbackslash
2 3 \textbackslash \textbackslash
2 4 \textbackslash \textbackslash
2 9 \textbackslash \textbackslash
2 11 \textbackslash \textbackslash
2 12 \textbackslash \textbackslash
2 13 \textbackslash \textbackslash
3 4 \textbackslash \textbackslash
3 11 \textbackslash \textbackslash
3 14 \textbackslash \textbackslash
3 15 \textbackslash \textbackslash
4 6 \textbackslash \textbackslash
4 14 \textbackslash \textbackslash
4 16 \textbackslash \textbackslash
5 6 \textbackslash \textbackslash
5 7 \textbackslash \textbackslash
5 8 \textbackslash \textbackslash
5 9 \textbackslash \textbackslash
5 10 \textbackslash \textbackslash
6 7 \textbackslash \textbackslash
6 8 \textbackslash \textbackslash
6 14 \textbackslash \textbackslash
6 16 \textbackslash \textbackslash
7 8 \textbackslash \textbackslash
7 10 \textbackslash \textbackslash
7 12 \textbackslash \textbackslash
7 17 \textbackslash \textbackslash
8 13 \textbackslash \textbackslash
8 15 \textbackslash \textbackslash
8 16 \textbackslash \textbackslash
8 17 \textbackslash \textbackslash
9 10 \textbackslash \textbackslash
9 11 \textbackslash \textbackslash
9 12 \textbackslash \textbackslash
9 13 \textbackslash \textbackslash
10 12 \textbackslash \textbackslash
10 17 \textbackslash \textbackslash
11 12 \textbackslash \textbackslash
11 13 \textbackslash \textbackslash
11 14 \textbackslash \textbackslash
11 15 \textbackslash \textbackslash
12 13 \textbackslash \textbackslash
12 17 \textbackslash \textbackslash
13 15 \textbackslash \textbackslash
13 16 \textbackslash \textbackslash
13 17 \textbackslash \textbackslash
14 15 \textbackslash \textbackslash
14 16 \textbackslash \textbackslash
15 16 \textbackslash \textbackslash
15 17 \textbackslash \textbackslash
16 17 \textbackslash \textbackslash\\
Dvo\v{r}\'{a}k output:Always colorable (using standard Alon-Tarsi)\\
}
\end{computation}

\begin{computation}[$G_{18}$]
\label{comp18}
\emph{\phantom{m}\\
Algorithm 1 edges: [(0,1),(0,2),(0,3),(0,4),(0,5),(1,2),(2,3),(3,4),(4,5),(1,5),(1,6),(2,6),(3,6),(4,6),(5,6)]\\
Algorithm 1 listsize: 5\\
Algorithm 1 output:15 50 \textbackslash \textbackslash
5 5 5 5 5 5 5 5 5 5 5 5 5 5 5 \textbackslash \textbackslash
0 1 \textbackslash \textbackslash
0 2 \textbackslash \textbackslash
0 3 \textbackslash \textbackslash
0 4 \textbackslash \textbackslash
0 5 \textbackslash \textbackslash
0 6 \textbackslash \textbackslash
0 7 \textbackslash \textbackslash
1 2 \textbackslash \textbackslash
1 3 \textbackslash \textbackslash
1 4 \textbackslash \textbackslash
1 5 \textbackslash \textbackslash
1 8 \textbackslash \textbackslash
1 9 \textbackslash \textbackslash
2 3 \textbackslash \textbackslash
2 4 \textbackslash \textbackslash
2 8 \textbackslash \textbackslash
2 10 \textbackslash \textbackslash
2 11 \textbackslash \textbackslash
3 4 \textbackslash \textbackslash
3 10 \textbackslash \textbackslash
3 12 \textbackslash \textbackslash
3 13 \textbackslash \textbackslash
4 6 \textbackslash \textbackslash
4 12 \textbackslash \textbackslash
4 14 \textbackslash \textbackslash
5 6 \textbackslash \textbackslash
5 7 \textbackslash \textbackslash
5 8 \textbackslash \textbackslash
5 9 \textbackslash \textbackslash
6 7 \textbackslash \textbackslash
6 12 \textbackslash \textbackslash
6 14 \textbackslash \textbackslash
7 9 \textbackslash \textbackslash
7 11 \textbackslash \textbackslash
7 13 \textbackslash \textbackslash
7 14 \textbackslash \textbackslash
8 9 \textbackslash \textbackslash
8 10 \textbackslash \textbackslash
8 11 \textbackslash \textbackslash
9 11 \textbackslash \textbackslash
9 13 \textbackslash \textbackslash
9 14 \textbackslash \textbackslash
10 11 \textbackslash \textbackslash
10 12 \textbackslash \textbackslash
10 13 \textbackslash \textbackslash
11 13 \textbackslash \textbackslash
11 14 \textbackslash \textbackslash
12 13 \textbackslash \textbackslash
12 14 \textbackslash \textbackslash
13 14 \textbackslash \textbackslash
\\
Dvo\v{r}\'{a}k output:Always colorable (using standard Alon-Tarsi)\\
}
\end{computation}

\begin{computation}[$G_{19}$]
\label{comp19}
\emph{\phantom{m}\\
Algorithm 1 edges: [(0,1),(0,2),(0,3),(0,4),(1,2),(2,3),(3,4),(1,4),(1,5),(2,5),(3,5),(4,5)]\\
Algorithm 1 listsize: 4\\
Algorithm 1 output: 12 36 \textbackslash \textbackslash
4 4 4 4 4 4 4 4 4 4 4 4 \textbackslash \textbackslash
0 1 \textbackslash \textbackslash
0 2 \textbackslash \textbackslash
0 3 \textbackslash \textbackslash
0 4 \textbackslash \textbackslash
0 5 \textbackslash \textbackslash
0 6 \textbackslash \textbackslash
1 2 \textbackslash \textbackslash
1 3 \textbackslash \textbackslash
1 4 \textbackslash \textbackslash
1 7 \textbackslash \textbackslash
1 8 \textbackslash \textbackslash
2 3 \textbackslash \textbackslash
2 7 \textbackslash \textbackslash
2 9 \textbackslash \textbackslash
2 10 \textbackslash \textbackslash
3 5 \textbackslash \textbackslash
3 9 \textbackslash \textbackslash
3 11 \textbackslash \textbackslash
4 5 \textbackslash \textbackslash
4 6 \textbackslash \textbackslash
4 7 \textbackslash \textbackslash
4 8 \textbackslash \textbackslash
5 6 \textbackslash \textbackslash
5 9 \textbackslash \textbackslash
5 11 \textbackslash \textbackslash
6 8 \textbackslash \textbackslash
6 10 \textbackslash \textbackslash
6 11 \textbackslash \textbackslash
7 8 \textbackslash \textbackslash
7 9 \textbackslash \textbackslash
7 10 \textbackslash \textbackslash
8 10 \textbackslash \textbackslash
8 11 \textbackslash \textbackslash
9 10 \textbackslash \textbackslash
9 11 \textbackslash \textbackslash
10 11 \textbackslash \textbackslash\\
Dvo\v{r}\'{a}k output:Always colorable (using standard Alon-Tarsi)\\
}
\end{computation}

\end{document}